\theoremstyle{plain}
\newtheorem{theorem}{Theorem}[section]
\newtheorem{corollary}[theorem]{Corollary}
\newtheorem{observation}[theorem]{Observation}
\newtheorem*{theorem*}{Theorem}
\newtheorem*{corollary*}{Corollary}
\theoremstyle{definition}
\title{A Note on Matching Variables to Equations}
\author{Attila Jo\'{o}}
\thanks{Funded by the Deutsche Forschungsgemeinschaft (DFG, German Research Foundation) - Grant No. 513023562 and partially by NKFIH 
OTKA-129211}
\address{Attila Jo\'{o},
Department of Mathematics, University of Hamburg, Bundesstra{\ss}e 55 (Geomatikum), 20146 Hamburg, Germany}
\email{attila.joo@uni-hamburg.de}
\keywords{linear equation system, infinite matroid, thin sum}
\subjclass[2020]{Primary: 15A06 Secondary: 05B35, 05C63}
\begin{document}

\begin{abstract}
We showed with J. P. Gollin that if a (possibly infinite) homogeneous linear equation system has only the trivial solution, then there exists 
an injective function from the variables to the equations such that each variable appears with  non-zero coefficient in its image. Shortly after, a 
more 
elementary proof was found by Aharoni and Guo. In this note we present a very short matroid-theoretic proof of this theorem.
\end{abstract}
\maketitle

\section{Introduction}
Let $ \mathbb{F} $ be a field, and $ A \in \mathbb{F}^{m \times n} $. Suppose that the homogenous  linear 
equation system $ Ax=0 $ has only 
the trivial solution $ x=0 $. Then for each subset $ C $ of the columns of $ A $, there must be at least $ \left|C\right| $ rows where at least one 
column in $ C $ have a non-zero element. Indeed, otherwise, the columns in $ C $ are linearly dependent which leads to a non-trivial solution of $ 
Ax=0 $. It follows by Hall's theorem that the columns can be matched to the rows along non-zero elements. More precisely, there is an injection $ 
\varphi: [n]\rightarrow [m] $ such that $ a_{\varphi(j),j}\neq 0 $ for every $ j \in [n] $. 

We investigated with J. P. Gollin if this remains true for infinite equation systems (with finitely many variables in each equation). 
Although Hall's  marriage theorem has a certain generalization for infinite graphs (see \cite[Theorem 3.2]{aharoni1991infinite}), the argument 
above does not seem to be adaptable to the infinite case. Even 
so, other tools in infinite matching theory (namely \cite[Theorem 1]{wojciechowski1997criteria})  let us answer the question affirmatively:
\begin{theorem}\label{thm: main}
 Let $ \mathbb{F} $ be a field, and let $ a: I \times J \rightarrow \mathbb{F} $ be a function such that 
for each fixed $ i\in I $, there are only finitely many $ j \in J $ such that $ a_{i,j}\neq 0 $. Suppose that the homogenous linear equation system 
defined by $ a $ has only the trivial solution, i.e.  $ \sum_{j\in J}a_{i,j}x_j=0 $ for each $ i\in I $ for a function $ x: J\rightarrow \mathbb{F} $
only if $ x $ is constant zero. Then there is an injection $\varphi: J\rightarrow I $ such that $ a_{\varphi(j),j}\neq 0 $ for every $ j\in J $.
\end{theorem}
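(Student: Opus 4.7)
The plan is to repackage the hypothesis as a matroid-independence statement and then invoke an infinite marriage theorem for matroids.

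First, I would build the bipartite graph $G$ on $I\sqcup J$ with edges $ij$ whenever $a_{i,j}\ne 0$. The desired injection $\varphi$ is then precisely a matching of $G$ saturating $J$, and the row-finiteness hypothesis on $a$ amounts to the finite-degree condition $\deg_G(i)<\infty$ for every $i\in I$.

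Second, I would introduce the \emph{thin-sum matroid} $M$ on $J$ associated to the columns $c_j:=a_{\cdot,j}\in\mathbb{F}^{I}$. Row-finiteness says that the family $(c_j)_{j\in J}$ is \emph{thin}, i.e.\ at every coordinate only finitely many $c_j$ are non-zero; consequently every formal linear combination $\sum_j\lambda_j c_j$ is pointwise a finite sum, and one declares $C\subseteq J$ to be $M$-independent iff $(c_j)_{j\in C}$ admits no non-trivial such thin dependence. The assumption that $Ax=0$ forces $x=0$ is literally the statement that $J$ itself is a base of $M$.

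With these ingredients in place, the core of the argument is to apply an infinite version of Rado's marriage theorem of the following shape: \emph{given a bipartite graph $H$ on $A\sqcup B$ in which every vertex of $A$ has finite degree, and a matroid $N$ on $B$ with $B$ itself $N$-independent, if Hall's condition $|N_H(F)|\ge|F|$ holds for every finite $F\subseteq B$, then $H$ has a matching saturating $B$.} Verifying finite Hall for our data is immediate: for finite $F\subseteq J$ the set $N_G(F)\subseteq I$ is finite and supports the column vectors $(c_j\!\upharpoonright\! N_G(F))_{j\in F}\in\mathbb{F}^{N_G(F)}$, which must be linearly independent there -- any dependence would, padded by zeros outside $N_G(F)$, yield a non-trivial thin dependence of $(c_j)_{j\in F}$, contradicting $M$-independence of $F$ -- so $|N_G(F)|\ge|F|$.

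The main obstacle is pinning down the correct infinite marriage theorem: the thin-sum matroid $M$ need not be finitary, so one has to use a version of Rado's theorem valid for possibly non-finitary matroids on the finite-degree complementary side, which is precisely the infinite-matroid / thin-sum machinery alluded to in the abstract (and which underlies the cited tools from \cite{wojciechowski1997criteria}). Once that theorem is in hand, the endpoint map of any saturating matching of $G$ yields the desired injection $\varphi:J\to I$ with $a_{\varphi(j),j}\ne 0$ for every $j\in J$.
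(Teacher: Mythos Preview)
Your reduction has a genuine gap: the ``infinite Rado/Hall theorem'' you state is false, and the matroid you put on $J$ carries no information. Since you assume $J$ itself is $M$-independent, $M$ is the free matroid on $J$; your hypothesis then reads ``$\deg_G(i)<\infty$ for all $i\in I$ and $|N_G(F)|\ge|F|$ for all finite $F\subseteq J$'', and this does \emph{not} force a $J$-saturating matching. Take $I=J=\mathbb{N}$, make $0\in J$ adjacent to every $i\in I$, and make $n\in J$ (for $n\ge 1$) adjacent only to $n-1\in I$. Every $i$ has degree $2$ and finite Hall holds for $J$, yet no matching saturates $J$ (wherever $0$ is sent, say to $k$, vertex $k+1\in J$ is blocked). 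Of course this particular $G$ cannot come from a thin-independent column family---but that is exactly the point: the thin-independence hypothesis is doing real work beyond finite Hall, and your formulation of the marriage theorem has thrown it away by collapsing $M$ to the free matroid. You acknowledge this obstacle in your last paragraph, but ``pinning down the correct infinite marriage theorem'' is the entire content of the problem, not a detail to be deferred; the paper's introduction in fact remarks that the Hall-condition argument does not seem to adapt to the infinite case.

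The paper's proof takes a different route that avoids any Hall-type criterion. It places the thin-sum matroid $M$ on $I\cup J$ (columns $f_j=a_{\cdot,j}$ together with the standard unit vectors $f_i$), so that $I$ is a \emph{base} of $M$ and $J$ is independent. Because $M$ is cofinitary (Afzali--Bowler), the Aharoni--Pouzet bijective base-exchange theorem dualises to it: extending $J$ to a base and exchanging into the base $I$ yields an injection $\varphi:J\to I$ with $(I\setminus\{\varphi(j)\})\cup\{j\}$ a base for each $j$, which forces $a_{\varphi(j),j}\neq 0$. The crucial difference from your plan is that the matroid lives on $I\cup J$, so the interaction between rows and columns is encoded matroidally and can be exploited via base exchange, rather than being flattened into a bipartite Hall condition.
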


 Shortly 
after, a more 
elementary proof was found by Aharoni 
and Guo \cite{aharoni2024tight}. They also pointed out a short matroid-theoretic proof in the finite case. This made us wonder if a similar 
matroid-theoretic approach
could be successful for 
infinite equation systems.  The key tools we use in this note are the so called thin-sum matroids introduced by Bruhn and Diestel 
(see \cite[Theorem 18]{bruhn2011infmatrgraph}) and further investigated by Afzali and  Bowler in \cite{borujeni2015thin}. These together with 
(the dualization of) a base 
exchange 
property of infinite matroids due to Aharoni and Pouzet  \cite[Theorem 2.1]{aharoni1991bases} leads to a short proof of Theorem 
\ref{thm: main}. The aim of this note is to present this proof.

\section{Preliminaries}
Infinite matroids were introduced by Higgs in the late 1960s \cite{higgs1969matroids}.  Several decades later the same concept was discovered 
independently by Bruhn et al. 
in 
\cite{bruhn2013axioms} together with the following axiomatization:
A \emph{matroid} is an ordered pair  $M=(E,\mathcal{I})$ with ${\mathcal{I} \subseteq 
\mathcal{P}(E)}$ such that  
\begin{enumerate}
[label=(\Roman*)]
\item $ \emptyset\in  \mathcal{I} $;
\item $ \mathcal{I} $ is closed under taking subsets;
\item\label{item: matroid axiom3} For every $ I,J\in \mathcal{I} $ where  $J $ is $ \subseteq $-maximal in $ \mathcal{I} $ and $ I 
$ is 
not, there exists an $  e\in J\setminus I $ such that
$ I\cup \{ e \}\in \mathcal{I} $;
\item\label{item: matroid axiom4} For every $ X\subseteq E $, any $ I\in \mathcal{I}\cap 
\mathcal{P}(X)  $ can be extended to a $ \subseteq $-maximal element of 
$ \mathcal{I}\cap \mathcal{P}(X) $.
\end{enumerate}
The sets 
in~$\mathcal{I}$ are called 
\emph{independent} while the sets in ${\mathcal{P}(E) \setminus \mathcal{I}}$ are 
\emph{dependent}. The maximal independent sets (exists by \ref{item: matroid axiom4}) are called \emph{bases}. 
The 
minimal 
dependent sets are the \emph{circuits}. Every dependent set contains a circuit (which is a non-trivial fact for infinite 
matroids). For an  ${X \subseteq E}$, the 
pair ${M  
\upharpoonright X :=(X,\mathcal{I} 
\cap \mathcal{P}(X))}$ is a matroid and it is called the \emph{restriction} of~$M$ to~$X$. We write $M - X$ for 
$ M  \upharpoonright (E\setminus X) $  and call it the minor obtained by the 
\emph{deletion} of~$X$. 
The \emph{contraction} of $ X $ in $ M $ is the matroid $ M/X $ on $ E\setminus X $ in which $ I\subseteq E\setminus X $ is independent iff $ 
J\cup I $ is 
independent in $ M $ for a (equivalently: for every) set  $ J $ that is maximal among the independent subset of $ X $.
Contraction and deletion commute, i.e. for 
disjoint 
$ X,Y\subseteq E $, we have $ (M/X)-Y=(M-Y)/X $.  Matroids of this form are the  \emph{minors} of~$M$. We say that~${X 
\subseteq E}$ \emph{spans}~${e \in E}$ in matroid~$M$ if either~${e \in X}$ or $ \{ e \} $ is dependent in $ M/X $. The dual $ M^{*} $ of $ M 
$ is the matroid on the same ground set in which a set is independent if it is disjoint from a base of $ M $.  Contraction and deletion are related by 
duality in the following way: $ (M/X-Y)^{*}=M^{*}/Y-X $.
A matroid is called \emph{finitary} if all its circuits are finite and it is \emph{cofinitary} if its dual is finitary. The class of finitary matroids is 
closed under taking minors and thus so is the class of cofinitary matroids.

\begin{theorem}[{Aharoni and Pouzet, \cite[Theorem 2.1]{aharoni1991bases}}]\label{thm: AP Base exch}
If $ M $ is a finitary matroid, then for every bases $ B_0 $ and $ B_1 $ of $ M $, there is a bijection $ f: B_0 
\rightarrow B_1 $ 
such that $ B_0\setminus \{ x \} \cup \{ f(x) \} $ is a base of $ M $ for each $ x\in B_0 $.
\end{theorem}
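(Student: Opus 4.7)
I would translate the statement into a matching problem. Form the bipartite \emph{exchange graph} $G$ on $B_0 \sqcup B_1$, with $x \in B_0$ and $y \in B_1$ adjacent iff $B_0 \setminus \{x\} \cup \{y\}$ is a base of $M$. A perfect matching in $G$ is exactly a bijection $f: B_0 \to B_1$ as required. The two ingredients of the proof are (i) using the finitary hypothesis to pin down the structure of $G$ and verify Hall-type conditions on both sides, and (ii) invoking an infinite marriage theorem to produce the perfect matching.

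\textbf{Key steps.} For $y \in B_1 \setminus B_0$, the neighborhood $N_G(y)$ equals $C(y, B_0) \cap B_0$, where $C(y, B_0)$ is the (finite, since $M$ is finitary) fundamental circuit of $y$ with respect to $B_0$; for $y \in B_0 \cap B_1$, the only neighbor of $y$ is $y$ itself (as $B_0 \setminus \{x\} \cup \{y\} = B_0 \setminus \{x\}$ has strictly smaller size for $x \neq y$). Hence $G$ has finite degrees on the $B_1$-side. For Hall's condition on that side, given a finite $F \subseteq B_1$ set $N := N_G(F) \subseteq B_0$, which is finite and independent; each $y \in F$ is spanned by $N$ (either $y \in N$ when $y \in B_0 \cap B_1$, or $C(y, B_0) \setminus \{y\} \subseteq N$ otherwise), so every independent subset of $F \cup N$ has at most $|N|$ elements, giving $|F| \leq |N|$. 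For Hall on the $B_0$-side, given a finite $X \subseteq B_0$ I would contract $T := B_0 \setminus X$: in the minor $M/T$ the set $X$ is a base, of finite rank $|X|$, and a vertex $z \in B_1 \setminus T$ is non-loop precisely when $z \in N_G(X)$; since $B_1$ spans $E$ in $M$ its image $B_1 \setminus T$ spans $E \setminus T$ in $M/T$, hence contains a base of $M/T$ of size $|X|$, and that base lies in $N_G(X)$, forcing $|N_G(X)| \geq |X|$.

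\textbf{Main obstacle.} The remaining step — converting the Hall-type conditions into an actual perfect matching in $G$ — is the crux of the argument. The classical one-sided Hall theorem for bipartite graphs with finite degrees on one side yields a matching $M_1$ saturating $B_1$, but the $B_0$-side of $G$ may carry infinite degrees, because cocircuits of finitary matroids can be infinite; so $G$ need not be locally finite, and one cannot simply invoke König's classical theorem to upgrade $M_1$ into a perfect matching. My intended route is to invoke Aharoni's infinite marriage theorem, whose criterion is met by the two-sided Hall conditions together with the finite $B_1$-side degrees. An alternative is to construct in parallel a matching saturating $B_0$ and then merge the two along the alternating paths and cycles of their union in a König-style argument, where the finite $B_1$-side degrees ensure each alternating component stabilizes to a well-defined choice of edges in $G$. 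Either way, the matroidal content sits entirely in the Hall-condition verification above, while the remaining obstacle is the graph-theoretic extraction of a perfect matching from a non-locally-finite bipartite graph.
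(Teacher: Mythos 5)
Your reduction to the exchange graph and both Hall-type verifications are correct: $N_G(y)=C(y,B_0)\setminus\{y\}$ for $y\in B_1\setminus B_0$ (finite, by finitariness), the spanning argument giving $|F|\le |N_G(F)|$ on the $B_1$-side, and the contraction argument in $M/T$ on the $B_0$-side are all sound. The genuine gap is the step you yourself flag as the main obstacle, and it is where the entire difficulty of the theorem lies. The two finite Hall conditions together with finite degrees on the $B_1$-side do \emph{not} imply a perfect matching. Counterexample: let $A=\{a_0,a_1,a_2,\dots\}$, $B=\{b_1,b_2,\dots\}$, and join $a_0$ to every $b_i$ and $a_i$ to $b_i$ for each $i\ge 1$. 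Every vertex of $B$ has degree $2$, Hall's condition holds for all finite subsets of both sides, and $B$ can be matched into $A$ (via $b_i\mapsto a_i$), yet no matching saturates $A$: each $a_i$ with $i\ge 1$ is forced onto $b_i$, leaving no partner for $a_0$. Hence ``invoke Aharoni's infinite marriage theorem'' cannot mean checking the conditions you verified: the Aharoni--Nash-Williams--Shelah criterion is the absence of an impediment, a substantially stronger condition than Hall, and verifying it in the exchange graph would itself require a genuinely matroidal argument that you have not given. In particular, your closing claim that ``the matroidal content sits entirely in the Hall-condition verification'' is false, because the purely graph-theoretic statement you rely on is false; this is exactly the wall the paper's introduction alludes to when it says the finite Hall argument ``does not seem to be adaptable to the infinite case'' even given \cite[Theorem 3.2]{aharoni1991infinite}.

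Your alternative route has the same gap relocated: the merging step is indeed sound (components of the union of a $B_0$-saturating and a $B_1$-saturating matching are single edges, alternating cycles, or one-/two-way infinite alternating paths, each of which admits a matching covering all its vertices --- the standard Banach/Cantor--Schr\"oder--Bernstein argument, with no parity problems since a finite alternating path would have an endpoint missed by the matching that is supposed to saturate its side), but it presupposes a matching saturating $B_0$, and producing one is precisely the crux. On the $B_0$-side the relevant neighbourhoods are fundamental cocircuits, which in a finitary matroid may be infinite, so the compactness form of Hall's theorem --- which you correctly use to saturate $B_1$ --- is unavailable, and finite Hall on $B_0$ does not suffice, as the counterexample shows. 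Note also that swapping the roles of the bases does not help: it gives finite degrees in a \emph{different} exchange graph (with adjacency $y\in C(x,B_1)$ rather than $x\in C(y,B_0)$), so the resulting matching saturates $B_0$ in the wrong graph, and the two adjacency relations do not coincide in general. For context, the paper does not prove this statement but quotes it from Aharoni and Pouzet \cite{aharoni1991bases}; their proof, and the short proofs of generalizations in \cite{janko2023baseexch}, go beyond Hall-type counting precisely because of this obstacle. As written, your proposal establishes the easy half (a matching saturating $B_1$) and correctly isolates, but does not close, the hard half.
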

\noindent For short proofs of generalizations of Theorem \ref{thm: AP Base exch} using the same notation we are using in this article, see 
\cite{janko2023baseexch}. 

The following concept of thin-sum 
matroids  was 
introduced by Bruhn and Diestel 
(see \cite[Theorem 18]{bruhn2011infmatrgraph}): Let $ X $ be a set and let $ \mathbb{F}$ be a field. A family $ \{ f_e:\ e\in E \} $ of $ 
X\rightarrow \mathbb{F} $ functions is called 
\emph{thin} if for each $ x\in X $ there are only finitely many $ e\in E $ with $ f_e(x)\neq 0 $. Note that for any $ \lambda: E \rightarrow 
\mathbb{F} $, the function $ \sum_{e\in E}\lambda_e f_e $ is well-defined pointwise. An $ I\subseteq E $ is \emph{thin independent} when 
$ \sum_{e\in I}\lambda_e f_e $ is the constant zero function only if $ \lambda_e=0 $ for each $ e\in I $.
\begin{theorem}[{Afzali and Bowler \cite[Corollary 3.4]{borujeni2015thin}}]\label{thm: thin cofin}
The notion of thin independence in a thin family of functions gives rise to a cofinitary matroid.
\end{theorem}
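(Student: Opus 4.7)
The plan is to realise the claimed matroid as the dual of an explicit finitary one built from the ``row space'' of the thin family. For each $x \in X$ define $r_x \colon E \to \mathbb{F}$ by $r_x(e) := f_e(x)$; thinness says precisely that each $r_x$ has finite support, so $r_x \in \mathbb{F}^{(E)}$, the free $\mathbb{F}$-module of finitely supported functions on $E$. Set
\[
R := \mathrm{span}_{\mathbb{F}}\{r_x : x \in X\} \subseteq \mathbb{F}^{(E)},
\]
a subspace whose elements are also finitely supported. Under the standard pairing $\langle \lambda, r \rangle := \sum_{e\in E} \lambda_e r(e)$ (well-defined for $\lambda \in \mathbb{F}^E$, $r \in \mathbb{F}^{(E)}$), the thin sum $\sum_e \lambda_e f_e$ is the zero function iff $\langle \lambda, r_x \rangle = 0$ for every $x \in X$ iff $\lambda \in R^{\perp}$. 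Hence $I \subseteq E$ is thin-independent exactly when no nonzero $\lambda \in R^{\perp}$ is supported in $I$.

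Next I would build a finitary matroid $N$ on $E$ whose circuits are the $\subseteq$-minimal nonempty supports of nonzero elements of $R$; equivalently, a set is $N$-independent iff it contains no such support. All circuits are finite. The matroid axioms are the standard ones for ``linear matroids'' attached to subspaces of $\mathbb{F}^{(E)}$: axiom (IV) follows from Zorn's lemma, because the property ``contains no support of a nonzero $r \in R$'' is preserved under unions of chains (any forbidden support is finite and would already lie in one chain member), and the remaining axioms reduce to finite-dimensional linear algebra applied to each finite restriction. Consequently $M := N^{*}$ is a well-defined cofinitary matroid.

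The task reduces to checking that the $M$-independent sets are exactly the thin-independent subsets of $E$. By definition $I$ is $M$-independent iff $E \setminus I$ is $N$-spanning; since $N$ is finitary this means each $e_0 \in I$ lies in the $N$-closure of $E \setminus I$, equivalently there is a nonzero $r \in R$ with $\mathrm{supp}(r) \cap I = \{e_0\}$, that is, $\delta_{e_0} \in \rho(R)$, where $\rho \colon R \to \mathbb{F}^{(I)}$ is the restriction $r \mapsto r|_I$. Thus $E \setminus I$ spans $N$ iff $\rho$ is surjective. The main obstacle is then translating this to the thin-independence condition; for that I would invoke the standard duality between $\mathbb{F}^{(I)}$ and its algebraic dual $\mathbb{F}^I$: $\rho$ fails to be surjective iff some nonzero $\mu \in \mathbb{F}^I$ vanishes on $\rho(R)$, and extending such $\mu$ by zero on $E \setminus I$ yields exactly a nonzero element of $R^{\perp}$ supported in $I$, i.e.\ a witness of thin-dependence; the reverse direction is immediate. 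Combining, $I$ is $M$-independent iff $I$ is thin-independent, completing the proof.
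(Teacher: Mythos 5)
The paper never proves this statement: Theorem~\ref{thm: thin cofin} is imported verbatim from Afzali Borujeni and Bowler \cite[Corollary 3.4]{borujeni2015thin}, so there is no in-paper argument to compare against. What you have written is, in essence, a reconstruction of the duality strategy behind the cited proof: realise the thin-sums system as the dual of the finitary support matroid $N$ of the row space $R\subseteq \mathbb{F}^{(E)}$, translate ``$E\setminus I$ spans $N$'' into surjectivity of the restriction map $\rho\colon R\to \mathbb{F}^{(I)}$, and convert failure of surjectivity into a nonzero annihilating $\mu\in\mathbb{F}^{I}$, i.e.\ a thin dependence supported in $I$. I checked the chain of equivalences and it is sound; in particular your use of finitariness of $N$ to pass between ``spanning'' and ``every $e_0\in I$ lies in the closure of $E\setminus I$'' is exactly where that hypothesis is needed, since closure is idempotent for finitary matroids but not in general.

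Two steps are glossed more than they should be, though both are standard and repairable. First, axiom \ref{item: matroid axiom3} for $N$ does not literally ``reduce to finite restrictions'': what you need is the classical fact that an independence system of finite character all of whose finite restrictions are matroids satisfies the full axioms (this is the identification of finitary matroids with classical independence spaces in \cite{bruhn2013axioms}); note also that the restriction of $N$ to a finite $E'$ is the support matroid of the \emph{shortened} code $\{r\in R:\ \operatorname{supp}(r)\subseteq E'\}$, not of the punctured code $\{r|_{E'}:\ r\in R\}$, so the finite linear algebra must be applied to the former. Second, the asserted equivalence between ``$e_0$ lies in the $N$-closure of $E\setminus I$'' and ``some nonzero $r\in R$ has $\operatorname{supp}(r)\cap I=\{e_0\}$'' is not definitional in the backward direction, because circuits of $N$ are \emph{minimal} supports: you need the standard linear-code lemma that if $e_0\in\operatorname{supp}(r)$ then some nonzero $r'\in R$ has $e_0\in\operatorname{supp}(r')\subseteq\operatorname{supp}(r)$ with $\operatorname{supp}(r')$ minimal (elimination against any smaller support avoiding $e_0$, using that all supports are finite). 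With these two standard facts made explicit, your proof is complete and self-contained, which is more than the note under review offers for this statement.
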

For more information about infinite matroids, we refer to \cite{nathanhabil}.
\section{Proof of the main result}
We show by a relatively simple dualization argument that Theorem \ref{thm: AP Base exch} remains true for cofinitary 
matroids. 
\begin{theorem}\label{thm: dual of AP}
If $ M $ is a cofinitary matroid, then for every bases $ B_0 $ and $ B_1 $ of $ M $, there is a bijection $ f: B_0 
\rightarrow B_1 $ 
such that $ B_0\setminus \{ x \}\cup \{ f(x) \} $ is a base of $ M $ for each $ x\in B_0 $.
\end{theorem}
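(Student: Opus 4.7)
The plan is to dualize Theorem \ref{thm: AP Base exch} after reducing to the symmetric difference $B_0 \triangle B_1$ via a minor construction. Write $K := B_0 \cap B_1$ and $L := E \setminus (B_0 \cup B_1)$, and consider the minor $N := M/K - L$ on the ground set $B_0 \triangle B_1$. Since the class of cofinitary matroids is closed under taking minors, $N$ is cofinitary, and the duality formula gives $N^* = M^*/L - K$, which is therefore finitary. A quick check (using that $B_0 = K \cup (B_0 \setminus B_1)$ and $B_1 = K \cup (B_1 \setminus B_0)$ are bases of $M$) shows that $B_0 \setminus B_1$ and $B_1 \setminus B_0$ are both bases of $N$; since each is the complement of the other in $B_0 \triangle B_1$, they are also bases of $N^*$.

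Next I apply Theorem \ref{thm: AP Base exch} to the finitary matroid $N^*$ with this pair of bases to obtain a bijection $g : B_1 \setminus B_0 \to B_0 \setminus B_1$ such that $(B_1 \setminus B_0) \setminus \{y\} \cup \{g(y)\}$ is a base of $N^*$ for every $y \in B_1 \setminus B_0$. Taking complements in $B_0 \triangle B_1$ translates this into the statement that $(B_0 \setminus B_1) \setminus \{g(y)\} \cup \{y\}$ is a base of $N$, so $f_1 := g^{-1} : B_0 \setminus B_1 \to B_1 \setminus B_0$ already exhibits the desired single-element exchange property inside $N$.

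The final step is to lift $f_1$ to a bijection $f : B_0 \to B_1$ by setting $f(x) := x$ for $x \in K$ and $f(x) := f_1(x)$ for $x \in B_0 \setminus B_1$. For $x \in K$ the exchange is trivial since $B_0 \setminus \{x\} \cup \{x\} = B_0$. For $x \in B_0 \setminus B_1$ the previous paragraph shows that $(B_0 \setminus B_1) \setminus \{x\} \cup \{f(x)\}$ is a base of $N$, which by the definition of contraction (and the independence of $K$ in $M$) is equivalent to $B_0 \setminus \{x\} \cup \{f(x)\}$ being a maximal independent subset of $B_0 \cup B_1$ in $M$. The only delicate point---and the step I expect to require the most care---is upgrading \emph{maximal independent in $B_0 \cup B_1$} to \emph{base of $M$}. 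This ought to follow from a standard closure argument: since $B_0$ is a base of $M$ we have $\mathrm{cl}_M(B_0) = E$, and by monotonicity and idempotence of closure any independent set whose closure contains $B_0$ already spans all of $E$.
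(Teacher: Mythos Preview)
Your proof is correct and follows essentially the same approach as the paper: pass to the minor $N = M/(B_0 \cap B_1) - \bigl(E \setminus (B_0 \cup B_1)\bigr)$, apply Theorem~\ref{thm: AP Base exch} to the finitary dual $N^{*}$ with the roles of the two bases switched, invert the resulting bijection, and extend by the identity on $B_0 \cap B_1$. The lifting step you flag as delicate is actually left implicit in the paper; your closure argument works, and it can be shortened by invoking axiom~\ref{item: matroid axiom3} directly with the base $B_0 \subseteq B_0 \cup B_1$ to see that any maximal independent subset of $B_0 \cup B_1$ is already a base of $M$.
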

\begin{proof}
First we show that we can assume without loss of generality that $ B_0 $ and $ B_1 $ are disjoint and $ B_0\cup B_1=E(M) 
$. 
Indeed, suppose we already know this special case of the theorem and let $ M' $ be the matroid that we obtain from $ M $ by 
contracting $ B_0\cap B_1 $ and deleting 
 $E\setminus (B_0\cup B_1) $.  Then $ M' $ is still cofinitary, moreover,  $ B_0\setminus B_1 $ and $ B_1\setminus B_0 $ are 
 disjoint bases of $ M' $ whose union is $ E(M') $. Let $ f' $ be a function  that we obtain by applying the assumed special case of the theorem with 
 $ M' 
 $, $ B_0\setminus B_1 $ and $ 
 B_1\setminus B_0 $. Then the extension $ f $ of $ f' $ to $ B_0 $ where $ f(x):=x $ for every $ x\in B_0 \cap B_1 $ is as 
 desired.
 
Note that $ M^{*} $ is a finitary matroid and under our assumption $ B_0 $ and $ B_1 $ are bases of $ M^{*} $. Let $ g: 
B_1 \rightarrow  B_0 $  be a bijection that we obtain by applying  Theorem \ref{thm: AP Base exch} with $ M^{*} $, $ B_1 
$ 
and $ B_0 $ (i.e. the roles of the bases are switched). Then $ B_1 \setminus \{ y \} \cup \{ g(y) \} $ is a base of $ M^{*} $ for each $ y\in B_1 $. 
This 
means that its complement $ B_0\setminus \{ g(y) \}\cup \{ y \} $ is a base of $ M $ for each $ y\in B_1 $. By substituting $ y $ with $ g^{-1}(x) 
$, we conclude that $ 
B_0\setminus \{ x \}\cup \{ g^{-1}(x) \} $ is a base of $ M $ for each $ x\in B_0 $.  Thus
$ f:=g^{-1} $ 
is suitable.
\end{proof}

\begin{observation}\label{obs: reform}
For a matroid $ M $ the following are equivalent: 
\begin{enumerate}[label=(\roman*)]
\item For every bases $ B_0 $ and $ B_1 $ of $ M $, there is a bijection $ f: B_0 
\rightarrow B_1 $ 
such that $ B_0\setminus \{ x \}\cup \{ f(x) \} $ is a base of $ M $ for each $ x\in B_0 $.
\item For every bases $ B_0 $ and $ B_1 $ of $ M $, there is a bijection $ f: B_0 
\rightarrow B_1 $ 
such that $ B_1\setminus \{ f(x) \}\cup \{ x \} $ is a base of $ M $ for each $ x\in B_0 $.
\end{enumerate}

\end{observation}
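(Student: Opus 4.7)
Both implications come down to the same one-line trick: apply the hypothesis with the two bases swapped and then take the inverse of the bijection produced. I expect no real obstacle here, because the observation is a pure symmetry statement and no matroid axiom is invoked.

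For the direction $(i)\Rightarrow(ii)$, I would fix bases $B_0$ and $B_1$ of $M$ and invoke (i) applied to the ordered pair $(B_1,B_0)$ rather than $(B_0,B_1)$. This yields a bijection $g:B_1\rightarrow B_0$ such that $B_1\setminus\{y\}\cup\{g(y)\}$ is a base of $M$ for every $y\in B_1$. I then set $f:=g^{-1}:B_0\rightarrow B_1$. Writing $y=f(x)=g^{-1}(x)$ so that $g(y)=x$, the previous condition becomes precisely ``$B_1\setminus\{f(x)\}\cup\{x\}$ is a base for every $x\in B_0$'', which is condition (ii).

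The reverse direction $(ii)\Rightarrow(i)$ is entirely analogous: apply (ii) with the roles of $B_0$ and $B_1$ exchanged to obtain a bijection $g:B_1\rightarrow B_0$ with $B_0\setminus\{g(y)\}\cup\{y\}$ a base for every $y\in B_1$, then take $f:=g^{-1}$ and substitute $y=g^{-1}(x)$ to rephrase the conclusion as (i).

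The only thing worth noting is that both implications genuinely need the universal quantifier over pairs of bases in the hypothesis; condition (i) is not visibly symmetric in $B_0$ and $B_1$, and its equivalence with (ii) is precisely the observation that, since nothing in (i) distinguishes the two bases, applying (i) with them swapped and then inverting the resulting bijection gives (ii). This is essentially the same swap-and-invert device already exploited in the proof of Theorem~\ref{thm: dual of AP}.
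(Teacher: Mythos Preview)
Your proof is correct and follows exactly the same approach as the paper: apply the assumed property with the roles of $B_0$ and $B_1$ switched, then take the inverse of the resulting bijection. The paper states this in a single sentence, while you spell out both directions explicitly, but the argument is identical.
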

\begin{proof}
To derive one property from the other, apply the assumed property while switching the roles of $ B_0 $ and $ B_1 $, then 
take the inverse of the resulting function.
\end{proof}
\begin{corollary}\label{cor: reduced dual of AP}
If $ M $ is a cofinitary matroid, $ J $ is independent in $ M $ and  $ B $ is a base of $ M $, then there is an injection $ f: J
\rightarrow B $ such that 
$ B \setminus \{ f(x) \}\cup \{ x \} $ is a base for each $ x\in B $.
\end{corollary}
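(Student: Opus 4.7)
The plan is to reduce the statement about an independent set $J$ and a base $B$ to the case of two bases, which is exactly Theorem \ref{thm: dual of AP} (together with its reformulation in Observation \ref{obs: reform}).

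First, I would use matroid axiom \ref{item: matroid axiom4} (applied with $X = E$) to extend the independent set $J$ to a base $B'$ of $M$. Now I have two bases $B'$ and $B$ of the cofinitary matroid $M$, so Theorem \ref{thm: dual of AP} applies. However, the direction of the conclusion there does not immediately match what the corollary asks for: the corollary wants $B \setminus \{f(x)\} \cup \{x\}$ to be a base (swapping an element of $B$ for an element of $J$), whereas Theorem \ref{thm: dual of AP} phrases the exchange in the opposite direction. This is precisely why Observation \ref{obs: reform} is convenient: applying form (ii) with $B_0 := B'$ and $B_1 := B$ yields a bijection $g: B' \rightarrow B$ such that $B \setminus \{g(x)\} \cup \{x\}$ is a base of $M$ for every $x \in B'$.

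Finally, since $J \subseteq B'$, the restriction $f := g \upharpoonright J$ is an injection from $J$ into $B$ for which $B \setminus \{f(x)\} \cup \{x\}$ is a base of $M$ for every $x \in J$, as required.

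The only subtlety here is making sure to invoke the correct version of the exchange property (the direction given by Observation \ref{obs: reform}(ii) rather than (i)); once that is settled, the extension of $J$ to a base and the restriction of the bijection to $J$ are immediate, so there is no real obstacle beyond choosing the right orientation of the base exchange.
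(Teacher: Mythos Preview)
Your proof is correct and follows essentially the same approach as the paper: extend $J$ to a base, apply Theorem~\ref{thm: dual of AP} together with Observation~\ref{obs: reform}, and restrict the resulting bijection to $J$. The paper compresses this into a single sentence, but your expanded version fills in exactly the intended details (and correctly reads ``for each $x\in J$'' rather than the evident typo ``$x\in B$'' in the statement).
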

\begin{proof}
Extend $ J $ to a base and apply Theorem \ref{thm: dual of AP} combined with Observation \ref{obs: reform}.
\end{proof}

\begin{proof}[Proof of Theorem \ref{thm: main}]
For $ j\in J $, let $ f_j: I\rightarrow \mathbb{F} $ be defined as $ f_{j}(i):=a_{i,j} $. For $ i\in I $, let $ f_i: I\rightarrow \mathbb{F} $ be the 
function for which $ f_i(i)=1 $ and $ f_i(i')=0 $ for $ i'\neq i $.\footnote{We assume 
that the index sets $ I $ and $ J $ are disjoint.}  Then $ \{ f_k:\ k\in I\cup J \} $ is a thin family of functions, thus by Theorem \ref{thm: thin cofin} 
it defines a cofinitary matroid $ M $ on $ I \cup J $ via thin independence. Moreover, $ I $ is base of $ M $ and $ J $ is independent in it. Thus 
Corollary 
\ref{cor: reduced dual of AP} applied with 
 $ J $ and  base $ I $ 
gives an injection $ \varphi: J 
\rightarrow I $ such that 
$ I \setminus \{ \varphi(j) \}\cup \{ j \} $ is a base of $ M $ for each $ j\in J $. But then we must have $ f_{j}(\varphi(j))\neq 0$ since otherwise $ 
\varphi(j) $  is not 
spanned by  $ I\setminus \{ \varphi(j) \}\cup \{ j \} $ in $ M $ because $ f_{\varphi(j)}(\varphi(j))=1 $ while $ f_k(\varphi(j))=0 $ for every $ k\in 
(I\setminus \{ \varphi(j) \}\cup \{ j \}) $, 
contradicting that $ I\setminus \{ \varphi(j) \}\cup \{ j \} $ is a base of $ M $. By definition this means that $ a_{\varphi(j),j}\neq 0 $ for every $ 
j\in J $ as desired.
\end{proof} 
\printbibliography
\end{document}